\newtheorem{thm}{Theorem}[section]
\newtheorem{lem}[thm]{Lemma}
\newtheorem{rem}[thm]{Remark}
\theoremstyle{definition}
\newtheorem{defn}[thm]{Definition}
\numberwithin{equation}{section}
\title{A proof of the Multijoints Conjecture and Carbery's generalization}
\author{Ruixiang Zhang}
\begin{document}

\maketitle
\begin{abstract}
We present a new proof of the Joints Theorem without taking derivatives. Then we generalize the proof to prove the Multijoints Conjecture and Carbery's generalization. All results are in any dimension over an arbitrary field.
\end{abstract}

\section{Introduction}

A recent major application of the polynomial method in discrete geometry is the proof of the following Joints Theorem. It was initially proved by Guth and Katz \cite{guth2010algebraic} in $\mathbb{R}^3$ and then generalized to $\mathbb{R}^d$ by Quilodr{\'a}n \cite{quilodran2010joints} and Kaplan-Sharir-Shustin \cite{kaplan2010lines}. For the arbitrary field case $\mathbb{F}^d$ it is also known by the work of Carbery-Iliopoulou\cite{carbery2014counting}.

\begin{thm}[Joints Theorem \cite{guth2010algebraic}\cite{quilodran2010joints}\cite{kaplan2010lines}\cite{carbery2014counting}]\label{jointsthm}
For $N$ lines in $\mathbb{F}^d$ ($d \geq 2$), a \emph{joint} is an intersection of $d$ lines such that the directions of the $d$ lines are linearly independent. Then there are $\lesssim N^{\frac{d}{d-1}}$ joints.
\end{thm}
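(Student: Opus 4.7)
The plan is to induct on the number of lines $N$, following the general polynomial-method outline of Quilodr\'an and Kaplan--Sharir--Shustin but carefully replacing their derivative step with a derivative-free substitute. The base case $N<d$ is vacuous since any joint requires $d$ lines in linearly independent directions.

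For the inductive step, suppose we have $N$ lines with $J$ joints in $\mathbb{F}^d$, and set $D:=\lceil (d!\,J)^{1/d}\rceil$. The space of polynomials of degree $\le D$ in $d$ variables over $\mathbb{F}$ has dimension $\binom{D+d}{d}>J$, so linear algebra yields a nonzero polynomial $p$ of degree $\le D$ vanishing at every joint; I would take $p$ of minimum degree with this property. Now I split on whether some line $\ell$ fails to be contained in the zero set $V(p)$: if so, then $p|_\ell$ is a nonzero univariate polynomial of degree $\le D$, so $\ell$ carries at most $D\lesssim J^{1/d}$ joints. Removing $\ell$ destroys at most these (a joint on $\ell$ survives so long as $d$ lines of linearly independent directions through it remain), leaving $N-1$ lines with at least $J-O(J^{1/d})$ joints. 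The inductive hypothesis combined with the elementary estimate $N^{d/(d-1)}-(N-1)^{d/(d-1)}\gtrsim N^{1/(d-1)}$ then closes the induction provided the implicit constant is chosen large enough in terms of $d$.

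The main case is when $p$ vanishes identically on every one of the $N$ lines. The key derivative-free observation I would invoke is that at any joint $q$, after choosing coordinates in which the $d$ essential lines through $q$ become the coordinate axes, every monomial of $p$ must involve at least two distinct variables --- for if not, $p$ restricted to some axis would be a nonzero univariate polynomial, contradicting $p|_\ell\equiv 0$. Hence the Taylor expansion of $p$ at $q$ starts in degree $\ge 2$, a purely algebraic multiplicity condition that makes sense over any field.

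The principal obstacle is to convert this enhanced vanishing into a contradiction with the minimality of $\deg p$ without actually differentiating $p$. The classical route would form $\partial_i p$, each of degree $\le D-1$ and vanishing at every joint by the order-two condition, directly contradicting minimality; this is precisely what the paper's title commits to avoiding. A natural substitute is a refined parameter count: vanishing to order $\ge 2$ at each of the $J$ joints imposes $(d+1)J$ linear conditions inside the $\binom{D+d}{d}$-dimensional polynomial space, and I would try to combine this with a descent on the degree --- perhaps by restricting to a generic hyperplane section to reduce to a problem in $\mathbb{F}^{d-1}$, or by iterating the entire dichotomy at progressively higher multiplicities at each joint --- to manufacture a strictly lower-degree polynomial still vanishing at every joint, contradicting the minimality of $\deg p$. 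Executing this descent cleanly and uniformly over an arbitrary field, so that no char-$0$ or real-analytic input sneaks in, is the main technical point I expect the paper to supply.
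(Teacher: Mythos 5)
Your proposal does not close the main case, and you say so yourself: when $p$ restricts to the zero polynomial on every line, you observe that the Taylor expansion of $p$ at each joint begins in degree $\ge 2$, but you supply no derivative-free mechanism for turning this into a contradiction with the minimality of $\deg p$. None of the substitutes you sketch (a refined parameter count, a generic hyperplane section, an iterated multiplicity descent) is carried out, and the first does not even point the right way: imposing order-$2$ vanishing at all $J$ joints costs only $(d+1)J$ linear conditions, which again produces a polynomial of degree $\lesssim J^{1/d}$ rather than one of strictly smaller degree. So the argument is incomplete exactly at the step that is the whole content of the theorem over an arbitrary field.

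The paper's resolution is different in kind and dispenses with your case split entirely. Instead of asking whether $Q$ vanishes on a line, or to what order it vanishes at a joint, one looks at the \emph{lowest-degree homogeneous part} of the Taylor expansion of $Q$ at a joint $P$, in coordinates where a line $l\ni P$ is the $x_d$-axis, and asks whether that homogeneous part depends on $x_d$: if so, $P$ is \emph{special} for $l$, otherwise \emph{ordinary}. Since $Q\neq 0$, its lowest homogeneous part at a joint must depend on some variable, so after sending the $d$ transversal lines to the coordinate axes every joint is special for at least one of its $d$ lines. Conversely, writing $Q=\sum_\alpha x^{(\alpha,0)}f_\alpha(x_d)$ along a fixed line and choosing $\alpha_0$ with $f_{\alpha_0}\neq 0$ and $|\alpha_0|$ minimal, every special joint on that line has its $x_d$-coordinate among the roots of $f_{\alpha_0}$, so each line carries at most $\deg Q$ special joints --- whether or not $Q$ vanishes on the line. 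This yields $|J|\le N\deg Q\lesssim N|J|^{1/d}$ in one stroke, with no induction on $N$, no minimal-degree polynomial, and no separate treatment of lines contained in $Z(Q)$. The missing idea in your outline is precisely this replacement of ``$p$ vanishes to order $\ge 2$ at $q$'' by ``the bottom homogeneous part of $p$ at $q$ must involve the direction of one of the $d$ lines,'' together with the bound on special joints per line via the degree of the single nonzero coefficient polynomial $f_{\alpha_0}$.
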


The joint theorem states an inequality of multilinear flavor and can be viewed as a simplified discrete counterpart of the Multilinear Kakeya theorem proved by Bennett-Carbery-Tao \cite{bennett2006multilinear} (non-endpoint case) and Guth \cite{guth2010endpoint} (full conjecture). Being a central topic in harmonic analysis, multilinear Kakeya states similar bounds to Theorem \ref{jointsthm} on intersections of $1$-tubes rather than lines. However, it says ``more'' than Theorem \ref{jointsthm} in the sense that it also takes the multiplicities of tubes into account (hence of a ``Kakeya maximal inequality'' flavor). Based on the multilinear Kakeya and people's success of proving the Joints Theorem, Carbery brought up the concept of ``multijoints'', which is a straight generalization of the concept of joints (see Theorem \ref{multijoint}). He then made a general conjecture on upper bounding the number of multijoints with multiplicity in any finite dimensional vector space over any field as a discrete analogue of the multilinear Kakeya. In this paper we prove this conjecture and prove its important special case known as the Multijoints Conjecture (also brought up by Carbery) along the way. Namely we will prove:

\begin{thm}\label{multijoint}[Multijoints Theorem]
Let $\mathbb{F}$ be a field. In $\mathbb{F}^d$, assuming we are given $d$ families of lines $L_1, \ldots, L_d$ such that $N_i = |L_i| > 0$. A multijoint is an intersection of $d$ lines $l_i \in L_i, 1\leq i \leq d$ such that the directions of the $d$ lines are linearly independent. Then there are $\lesssim (\prod_{i=1}^d N_i)^{\frac{1}{d-1}}$ multijoints.
\end{thm}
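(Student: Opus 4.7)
The plan is to apply the polynomial method in parallel to each of the $d$ families of lines. Let $J$ denote the set of multijoints and set $M := |J|$; suppose for contradiction that $M > C(d)\bigl(\prod_i N_i\bigr)^{1/(d-1)}$ for a suitably large constant $C(d)$. For each $i$, choose an integer $D_i = \Theta_d(M^{1/d})$ large enough that $\binom{D_i + d}{d} > M$; a linear algebra dimension count then yields a nonzero polynomial $P_i \in \mathbb{F}[x_1, \dots, x_d]$ of degree at most $D_i$ vanishing on all of $J$. The one-variable input is classical: for any line $l \in L_i$, the restriction $P_i|_l$ is a polynomial of degree at most $D_i$, so either $|l \cap J| \le D_i$ (call $l$ \emph{short}) or $P_i$ vanishes identically on $l$ (call $l$ \emph{heavy}).

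The argument closes cleanly in the favorable subcase in which every line of every $L_i$ is short. Summing over $l \in L_i$ and using that every multijoint lies on at least one line of $L_i$ yields
\begin{equation*}
M \;\le\; \sum_{l \in L_i} |l \cap J| \;\le\; D_i N_i .
\end{equation*}
Substituting $D_i \lesssim_d M^{1/d}$ and rearranging gives $M \lesssim_d N_i^{d/(d-1)}$ for every $i$, and taking the geometric mean over $i$ produces
\begin{equation*}
M \;\lesssim_d\; \Bigl(\prod_i N_i\Bigr)^{1/(d-1)},
\end{equation*}
which contradicts the standing hypothesis once $C(d)$ is large enough.

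The main obstacle is the general case, in which heavy lines are present. The natural route is to reduce to the favorable subcase by iteratively pruning $J$: whenever some pair $(i, l)$ with $l \in L_i$ satisfies $1 \le |l \cap J'| \le D_i$ (starting from $J' := J$), delete $l \cap J'$ from $J'$. Each family $L_i$ supports at most $N_i$ such steps and each step removes at most $D_i$ points, so the total loss is bounded by $\sum_i D_i N_i$; by balancing the $D_i$ across families (allowing them to be unequal so that $D_i N_i$ is as balanced as possible while keeping each $\binom{D_i+d}{d} > M$), one aims for this loss to be a small fraction of $M$, leaving $|J'|$ comparable to $M$. After pruning, every line is either disjoint from $J'$ or meets $J'$ in strictly more than $D_i$ points, so each re-selected $P_i$ vanishes on every line of $L_i$ meeting $J'$; at each surviving $p \in J'$ one then has $d$ concurrent lines with linearly independent directions, one from each family, on which the respective $P_i$ vanish identically.

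Executing this reduction and closing the argument --- where the paper's derivative-free scheme is expected to intervene --- is the delicate heart of the proof. The natural algebraic target is the product $Q := P_1 \cdots P_d$, of degree at most $\sum_i D_i = O_d(M^{1/d})$, which vanishes along $d$ heavy lines at each surviving multijoint. One would expect either a B\'ezout-type intersection count, or a direct degree-lower-bound argument exploiting the structured multi-vanishing on a system of lines with linearly independent directions, to force $\deg Q$ beyond the available bound and yield the desired contradiction. Making the parameter balancing robust for all configurations of $(N_1, \dots, N_d)$ --- rather than only the essentially balanced ones, where the iteration is most natural --- is where I expect the bulk of the technical work to lie.
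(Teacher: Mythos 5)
Your favorable subcase is correct, but it is the easy half of the problem; both of the remaining steps have genuine gaps. First, the pruning reduction cannot be made to work by balancing the $D_i$: the dimension count forces $D_i \gtrsim M^{1/d}$ for \emph{every} $i$ (you need $\binom{D_i+d}{d} > M$), so the total loss $\sum_i D_i N_i \gtrsim M^{1/d} \sum_i N_i$ cannot be made a small fraction of $M$ when the families are unbalanced. Concretely, take $d=3$, $N_1 = N^2$, $N_2 = N_3 = N$, so the target bound is $M \lesssim N^2$; in the borderline regime $M \sim N^2$ the loss from the first family alone is $\gtrsim M^{1/3} N_1 = N^{8/3} \gg M$, so the pruning can delete all of $J$. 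Second, and more fundamentally, the endgame with heavy lines --- the situation where each surviving multijoint lies on $d$ transversal lines on which the polynomials vanish identically --- is exactly the step that blocked all earlier approaches to this conjecture, and you leave it as an unspecified ``B\'ezout-type count or degree lower bound.'' No classical B\'ezout count applies once a line is contained in $Z(Q)$, and the usual fix (differentiating and inducting on vanishing order) is not available over an arbitrary field. This unproved step is the entire content of the theorem.

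The paper's proof avoids the short/heavy dichotomy altogether. It takes a single polynomial $Q$ of degree $\lesssim |J|^{1/d}(\prod_i N_i)^{1/d}$ (note: larger than $|J|^{1/d}$) and, instead of mere vanishing on $J$, kills all low-order Taylor coefficients of $Q$ at each multijoint in coordinates adapted to the $d$ transversal lines there --- specifically, every surviving lowest-order monomial $x^\beta$ must have some $\beta_i > |L_i|$. It then introduces a $(P,l)$-multiplicity $m_Q(P,l)$ that is well defined and finite even when $Q \equiv 0$ on $l$ yet still satisfies the B\'ezout-type bound $\sum_{P \in l} m_Q(P,l) \le \deg Q$. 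Pigeonholing on which index $i$ carries the large exponent and summing the multiplicity bound over the lines of that family closes the argument in a few lines, with lines in $Z(Q)$ treated identically to all others. If you want to salvage your outline, the missing ingredient is precisely such a vanishing-order surrogate that survives containment of the line in the zero set; without it the heavy-line case remains open.
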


\begin{thm}\label{carberytheorem}[Carbery's conjecture holds]
In $\mathbb{F}^d$, assuming we are given $d$ families of lines $L_1, \ldots, L_d$ such that $N_i = |L_i| > 0$. For every point $P \in \mathbb{F}^d$, define the multijoint multiplicity $\mu(P) = |\{(l_1, \ldots, l_d): l_i \in L_i, \{l_i\} \text{ form a multijoint at } P\}|$. Then
\begin{equation}\label{carberytheoremeq}
\sum_{P \in \mathbb{F}^d} \mu(P)^{\frac{1}{d-1}} \lesssim (\prod_{i=1}^d N_i)^{\frac{1}{d-1}}.
\end{equation}
\end{thm}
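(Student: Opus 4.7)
I would prove Theorem~\ref{carberytheorem} by extending the extremal-polynomial proof of the Joints Theorem (Quilodr\'an and Kaplan--Sharir--Shustin) to the multiplicity-weighted setting, following the same scheme the paper uses to handle Theorem~\ref{multijoint} but carrying the weights $\mu(P)^{1/(d-1)}$ throughout. The outer loop is an induction on $\sum_i N_i$: at each step I would produce a ``light line'' $\ell \in L_i$ (for some $i$) whose deletion decreases $S := \sum_P \mu(P)^{1/(d-1)}$ by at most $c \prod_j N_j^{1/(d-1)} / N_i$ for an appropriate constant $c$. Summed over the lines of all families this gives a total telescoping loss of at most $d c \prod_j N_j^{1/(d-1)}$, which recovers \eqref{carberytheoremeq}.

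\textbf{Producing the light line.} Suppose no such line exists. Then every $\ell \in L_i$ has weighted incidence $\sum_{P \in \ell} \mu(P)^{1/(d-1)} \gg \prod_j N_j^{1/(d-1)} / N_i$. After a dyadic pigeonhole decomposition I may restrict to multijoints of essentially constant multiplicity $m$, turning the problem into one about an integer number of multijoints with each contributing weight $\sim m^{1/(d-1)}$. I would then apply the polynomial method: find a nonzero polynomial $p$ of degree $D$ vanishing to appropriate ``order'' at every such multijoint, with $D$ chosen from a dimension count so that $D$ just barely accommodates this vanishing. Minimality of $D$ forces $p$ not to vanish identically on some line $\ell$; restricting $p$ to $\ell$ gives a univariate polynomial with at most $D$ zeros, which bounds the weighted incidence of $\ell$ from above and contradicts the ``no light line'' assumption. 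The linear independence of the $d$ directions at each multijoint is used, as in the unweighted case, to promote the pointwise information into a constraint on $p$ strong enough to force an identical vanishing on some line.

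\textbf{Main obstacle.} The principal difficulty is arranging the vanishing scheme so that the fractional exponent $\tfrac{1}{d-1}$ appears naturally. Each multijoint $P$ must be given a vanishing weight that reflects both $\mu(P)$ and the fact that on any single line through $P$ one sees only the line-incidences, not the full tuple count $\mu(P)$; matching these two requires the asymmetric power $m^{1/(d-1)}$, and the dimension count, the Bezout-type bound on each line, and the per-line telescoping loss must all line up with this exponent. A second subtlety, emphasized by the paper's abstract, is that we work over an arbitrary field $\mathbb{F}$, so higher-order vanishing cannot be detected through derivatives; it must be encoded through the zero set of $p$ on several cleverly chosen subsets attached to each multijoint, exploiting the linearly independent directions at $P$. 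This derivative-free vanishing lemma, developed by the paper for the Joints Theorem, is the key ingredient I expect to reuse and where the real work lies.
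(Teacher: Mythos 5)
Your outline follows the classical deletion-plus-minimal-degree-polynomial scheme, but it leaves the genuinely hard steps unresolved and contains one step that cannot give the stated bound. First, the dyadic pigeonholing on $\mu(P)$ costs a factor of $\log(\max_P \mu(P))$, which is not bounded in terms of $d$; the implied constant in \eqref{carberytheoremeq} depends only on $d$, so you cannot restrict to a single dyadic multiplicity level and then sum over levels. (The paper avoids any such decomposition: all multiplicities are kept in play simultaneously via the successive minima $r_j(P,L)$ and the equivalence $M(P)\sim\prod_j r_j(P,L)$ of Lemma \ref{whatisMP}.) Second, and more seriously, the mechanism ``minimality of $D$ forces $p$ not to vanish identically on some line; restrict to that line'' is exactly the step this paper is designed to eliminate. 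Over an arbitrary field the classical way to exploit minimality is through derivatives, which is unavailable, and in the weighted setting you must convert a local vanishing condition at $P$ into a quantity that (i) sums to at least $c\,\mu(P)^{1/(d-1)}$ over the lines of $L$ \emph{actually passing through} $P$ --- not merely over $d$ conveniently chosen transversal lines --- and (ii) sums to at most $\deg p$ along each fixed line \emph{even when $p$ vanishes identically on that line}. You correctly identify this as ``where the real work lies,'' but you supply no construction for it, so the argument does not close.

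For contrast, the paper's proof has no induction on lines and no minimal-degree contradiction. It first reduces to Theorem \ref{multiplicityjointsthm} (a single multiset of $N$ lines) by replicating each line of $L_i$ exactly $\prod_{j\neq i}N_j$ times. It then defines a generalized intersection multiplicity $m_Q(P,l)$ that remains finite when $Q$ vanishes on $l$ yet still obeys the B\'ezout-type bound $\sum_{P\in l}m_Q(P,l)\leq\deg Q$ (Lemma \ref{bezoutineq}); this is what makes lines contained in $Z(Q)$ harmless. At each joint it builds an extremal flag $V_1(P)\subseteq\cdots\subseteq V_d(P)$ realizing the successive minima, imposes Taylor-coefficient conditions adapted to that flag, and proves the key estimate $\sum_{i:P\in l_i}m_Q(P,l_i)\gtrsim B\,M(P)^{\frac{1}{d-1}}$ by showing that the lines through $P$ with small $m_Q(P,\cdot)$ are all parallel to a proper subspace and hence are few. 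These ingredients --- the generalized multiplicity, the flag construction, and the distributional lower bound \eqref{contributionofP} --- are absent from your proposal, and some substitute for each of them is needed.
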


As stated above, Theorem \ref{carberytheorem} (as well as its generalization Theorem \ref{multiplicityjointsthm} that we shall see a bit later) can be viewed as a discrete analogue of the Multilinear Kakeya Theorem.

Important partial results on Theorem \ref{multijoint} and Theorem \ref{carberytheorem} has been made in \cite{iliopoulou2013counting}\cite{iliopoulou2013discrete}\cite{carbery2014counting}\cite{carbery2014colouring}\cite{iliopoulou2015counting}\cite{iliopoulou2015incidence}\cite{hablicsek2014joints}\cite{yang2016generalizations}. For example in \cite{iliopoulou2015incidence}, Theorem \ref{multijoint} was shown to be true for $\mathbb{R}^d$ or $\mathbb{F}^3$ for a general field $\mathbb{F}$. The partial results in the paper we list above have various restrictions on the field, the dimension, the type of multijoints (assuming the multijoints are \emph{generic}, which means whenever $d$ lines from the $d$ families meet they form a multijoint, is usually helpful), the exponent (In \cite{yang2016generalizations} Theorems \ref{multijoint} and \ref{carberytheorem} in $\mathbb{R}^d$ up to endpoint on the exponent was shown, among other generalizations). We do not state a comprehensive set of known results here and refer the interested readers to the above references.

Our approach, as with all current proofs of the Joints Theorem, uses the polynomial method whose initial application in incidence geometry was in Dvir's proof of the finite field Kakeya Conjecture \cite{dvir2009size}. A somewhat unique feature of our proof can be briefly described in the following way: In the ``classical'' polynomial method in discrete geometry, we usually force the polynomial to vanish on a lot of lines as a result of the polynomial being heavily incident to the line. We then usually have to take special care to show that a nonzero polynomial could not vanish on too many lines in the problem and hence deduce a contradiction. On the other hand, in our proof we mainly look at the Taylor series of the polynomial around each multijoint rather than caring if it actually vanishes at the point. As a result, the lines where our polynomial vanish do not need to be specially taken care of, and could be handled in exactly the same way as we handle the other lines where the polynomial does not vanish. For example along the way of our proof of Theorems \ref{multijoint} and \ref{carberytheorem}, we are able to define a ``generalized intersection multiplicity'' of a line $l$ and a polynomial hypersurface $Z(Q)$ at a point $P \in l \bigcap Z(Q)$ even if $l \subseteq Z(Q)$. And for this generalized intersection multiplicity we have an upper bound estimate as good as what B\'{e}zout's Theorem gives (see Section 3 for details).

The ``Taylor series'' viewpoint here was already around for some time (\cite{carberycomm}), while it seems to me that the ``generalized intersection multiplicity'' we defined above is a relatively new concept. I believe the results in the current paper shows that both are interesting in their own rights and are nice additions to our current toolbox of the polynomial method. At least we see its use in the multilinear setting of the incidence problems through the problems here.

For future directions, it is conceivable that one may generalize Theorems \ref{multijoint} and \ref{carberytheorem} to the setting where we have joints formed by general algebraic curves (which was considered in \cite{iliopoulou2013counting}\cite{iliopoulou2013discrete}\cite{iliopoulou2015counting}\cite{iliopoulou2015incidence}\cite{yang2016generalizations}). Another more interesting direction is to try to prove the similar upper bounds of joints formed by higher dimensional objects (which should be the endpoint and $\mathbb{F}^d$ case of the results in \cite{yang2016generalizations}). Maybe this can be done by generalizing the tools we have here and probably those we developed in \cite{zhang2015endpoint} for handling the interaction between multiple hypersurfaces and a high dimensional object. We do not address either topic here and leave them to interested readers.

In this paper, we fix the dimension $d>1$. All the implied constants will depend on $d$ and we hence suppress the dependence on $d$ in our notations. For example, ``$\lesssim$'' should really be understood as ``$\lesssim_d$''.

\section*{Acknowledgements}

I was supported by Princeton University and the Institute for Pure and Applied Mathematics (IPAM) during the research. Part of this research was performed while I was visiting IPAM, which is supported by the National Science Foundation. I thank IPAM for their warm hospitality and amazing program. I would like to thank Anthony Carbery, Jordan Ellenberg, Larry Guth, Marina Iliopoulou, Yuchen Liu, Ben Yang and Ziquan Zhuang for helpful discussions.

\section{A new proof of the Joints Theorem}

In this section we present a new proof of the Joints Theorem \ref{jointsthm} that could be generalized to prove the Multijoints Theorem and Carbery's conjecture as we will see a bit later. In this proof we do not take any kind of derivatives hence it works the same way on linear spaces over every field.

Let $\mathbb{F}$ be an arbitrary field. As a convention, by a \emph{nonzero polynomial} over $\mathbb{F}^d$ we mean a polynomial whose coefficients are not all zero, even though it might vanish at every point in the whole $\mathbb{F}^d$. We say $Q \neq 0$ if $Q$ is a nonzero polynomial and shall continue using this notion throughout this paper. First we state the following variant of the well-known parameter counting lemma in linear algebra, which is immediate from the theory of linear equations.

\begin{lem}[Parameter Counting]
Write any polynomial $Q$ of $d$ variables as $Q = \sum_{\beta = (\beta_1, \ldots, \beta_d)} c_{\beta} x^{\beta}$ where almost all $c_{\beta} = 0$. For any finite number $A$ and $A$ homogeneous linear forms $H_1, \ldots, H_A$, each supported on finitely many $c_{\beta}$ as variables, there exists a nonzero polynomial $Q_0$ of degree $\lesssim A^{\frac{1}{d}}$ such that $H_1, \ldots, H_A$ all vanish at the coefficients of $Q_0$.
\end{lem}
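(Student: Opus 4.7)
The plan is to carry out a direct linear-algebra dimension count on the space of polynomials of degree at most $D$, choosing $D$ just large enough so that the dimension exceeds the number of homogeneous constraints.

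First, let $V_D$ denote the $\mathbb{F}$-vector space of polynomials in $d$ variables of degree at most $D$. Since $V_D$ has a basis indexed by tuples $\beta = (\beta_1, \ldots, \beta_d)$ with $\beta_1 + \cdots + \beta_d \leq D$, a standard stars-and-bars count gives
\begin{equation*}
\dim V_D \;=\; \binom{D+d}{d} \;\geq\; \frac{D^d}{d!}.
\end{equation*}
Choose $D$ to be the least nonnegative integer with $\binom{D+d}{d} > A$; the elementary inequality above combined with the fact that $D-1$ fails this bound gives $D \lesssim A^{1/d}$, where the implied constant depends only on $d$.

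Next, I would restrict each linear form $H_j$ to $V_D$. Because each $H_j$ involves only finitely many of the $c_\beta$, the restriction $H_j|_{V_D}$ is simply the linear form obtained by setting $c_\beta = 0$ for all $\beta$ with $|\beta| > D$; this is a well-defined homogeneous linear form on $V_D$. Thus the simultaneous vanishing conditions $H_1 = \cdots = H_A = 0$ cut out the kernel of an $\mathbb{F}$-linear map $\Phi : V_D \to \mathbb{F}^A$.

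By the rank-nullity theorem, $\dim \ker \Phi \geq \dim V_D - A > 0$, so $\ker \Phi$ contains a nonzero element $Q_0 \in V_D$, which is a nonzero polynomial of degree $\leq D \lesssim A^{1/d}$ whose coefficients are annihilated by every $H_j$. This completes the argument.

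I do not expect a serious obstacle: the only thing to watch is that $H_j$ might in principle involve coefficients of monomials of arbitrarily high degree, but since each $H_j$ is supported on finitely many $c_\beta$, the restriction to $V_D$ is unambiguous for every $D$, and that is all that the rank-nullity step requires.
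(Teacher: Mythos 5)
Your proof is correct and is precisely the standard dimension-count-plus-rank-nullity argument that the paper invokes when it calls the lemma ``immediate from the theory of linear equations'' (the paper gives no further proof). Nothing is missing; the restriction of each $H_j$ to the finite-dimensional space $V_D$ is handled correctly, and the choice of $D$ gives the required bound $D \lesssim A^{1/d}$.
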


\begin{proof}[Proof of Theorem \ref{jointsthm}]
Assume that $J$ is the set of joints and $L$ the set of given lines. By parameter counting, we can find a nonzero polynomial $Q$ of degree $\lesssim |J|^{\frac{1}{d}}$ that vanishes on each joint. Now for each line $l \in L$, we define the concept of ``ordinary'' and ``special'' joints (with respect to $Q$) on it: For any joint $P \in l$, there exists an affine linear transform $T$ that sends $P$ to the origin, $l$ to the $x_d$-axis, and $Q$ to a new function $(T^{-1})^* Q$ which is a polynomial of degree same as $Q$. We call $P$ an \emph{ordinary} joint on $l$, if the lowest homogeneous term of $(T^{-1})^* Q$ is independent of $x_d$. Otherwise $P$ is called a \emph{special} joint on $l$.

The above definition is intrinsic, i.e. independent of the transform $T$. In fact, if there is another such linear transform $T'$. Then $T' = T_1 \circ T$ where $T_1$ is a scaling when restricted to the $x_d$-axis. Such $T_1$ has the form $(T_1^{-1})^* (x_1, \ldots, x_d) = (h_1 (x_1, \ldots, x_{d-1}), \ldots, h_{d-1} (x_1, \ldots, x_{d-1})), \lambda x_d + h_d (x_1, \ldots, x_{d-1})$ where $\lambda \neq 0$, $h_1, \ldots, h_d$ are linear functions and the transform $(h_1, \ldots, h_{d-1})$ is invertible. Now $({T'}^{-1})^* Q = (T_1^{-1})^* (T^{-1})^* Q$. By the explicit form of $(T_1^{-1})^*$ we obtained above, we see that the definition of ordinarity/speciality does not depend on whether we choose the linear transform to be $T$ or $T'$.

Next we notice that a joint cannot be ordinary on all lines passing through it, since we can take a linear transform to transform the $d$ transversal lines passing through it to the coordinate axes and $Q \neq 0$. Then the lowest order homogeneous term of the new polynomial has to depend on some variable. Hence $P$ is special with respect to the corresponding line.

Let us prove that on any line $l$ there are $\leq \deg Q$ special joints. We take a linear transform $T$ that sends $l$ to $x_d$ axis. Now the polynomial $Q$ under the new coordinate system have the form $Q = \sum_{\alpha} f_{\alpha} (x_d) x^{(\alpha, 0)}$. Here by definition, $\alpha = (\alpha_1, \alpha_2, \ldots, \alpha_{d-1})$ is a $(d-1)$-dimensional  multi-index and $x^{(\alpha, 0)} = x_1^{\alpha_1} x_2^{\alpha_2} \cdots x_{d-1}^{\alpha_{d-1}}$. We denote $|\alpha| = \sum_{i} \alpha_i$. Now we find a minimal $|\alpha_0|$ such that $ f_{\alpha_0} (x_d) \neq 0$ and claim that all special joints mush have their  $x_d$ coordinate being a root of $f_{\alpha_0}$. Indeed, if $P \in l$ is a joint such that its $x_d$ coordinate $x_d (P)$ is not a root of $f_{\alpha_0}$, then the lowest homogeneous term of $(T^{-1})^* Q$ will include a nonzero monomial $f_{\alpha_0} (x_d (P)) x^{(\alpha_0, 0)}$, since $|\alpha_0|$ is the smallest among all $|\alpha|$. Now all monomials in this lowest homogeneous term have to be independent of $x_d$, since otherwise the total power of $x_1, \ldots, x_{d-1}$ in some monomial would be smaller than $|\alpha_0|$ (a contradiction). Hence $P$ is ordinary on $l$.

Since each joint must be special with respect to at least one line, and on each line there are $\leq \deg Q$ special joints, we have $|J| \leq N \deg Q \lesssim N |J|^{\frac{1}{d}}$. Thus $|J| \lesssim N^{\frac{d}{d-1}}$.
\end{proof}

\section{Some definitions motivated by the new proof of the Joints Theorem}

The key idea in the proof of Theorem \ref{jointsthm} is not to be scared of the vanishing of a polynomial on a line, and use the ``ordinarily'' of most points on a line to proceed. We will generalize this idea to prove the Multijoints Conjecture (Theorem \ref{multijoint}) and Carbery's generalization (Theorem \ref{carberytheorem}) for all fields in all dimensions.

We prepare some tools for the proofs. In order to deal with multijoints problems, we would like to generalize the concept of ``ordinarity/speciality'' to take multiplicity into account. The definition we end up using is slightly different from a direct generalization of ``ordinarity/speciality'' in the last section.

Let $\mathbb{F}$ be an arbitrary field. In $\mathbb{F}^d$ assuming we have a point $P$ on a line $l$ and a polynomial $Q$. We define the \textbf{$(P, l)$-multiplicity of $Q$} in the following way:

We choose a linear transform $T$ that sends $l$ to the $d$-th coordinate axis. Then we can write
\begin{equation}\label{defnofmPl}
(T^{-1})^* Q = \sum_{\alpha = (\alpha_1, \ldots, \alpha_{d-1})} x_1^{\alpha_1} \cdots x_{d-1}^{\alpha_{d-1}} f_{\alpha} (x_d) = \sum_{\alpha = (\alpha_1, \ldots, \alpha_{d-1})} x^{(\alpha, 0)} f_{\alpha} (x_d).
\end{equation}

In the above expansion, look at all $(d-1)$-dimensional indices $\alpha$ with $f_{\alpha} \neq 0$ and $|\alpha| = \alpha_1 + \cdots + \alpha_{d-1}$ being the smallest possible. Call all such tuples $\alpha$ to be \emph{lowest} (with respect to $(P, l, T)$). Assuming $T (P) = (0, \ldots, 0, p_T)$. Look at all lowest tuples $\alpha$ and corresponding $f_{\alpha}$. Assuming $p_T$ is a root of $f_{\alpha}$ with multiplicity $m_{\alpha}$ (which is allowed to be zero). Then we define the \emph{$(P, l)$-multiplicity of $Q$} to be the minimal $m_{\alpha}$ among all lowest $\alpha$.

We check that this is a well-defined quantity largely similar to what we did in the last section. In fact, if we have another linear transform $T'$ sending $l$ to the $d$-th coordinate axis and $P$ to $(0, \ldots, 0, p_{T'})$, then $T' = T_1 \circ T$ where $T_1$ is a non-degenerate linear transform when restricted to the $x_d$-axis. Such $T_1$ has the form $(T_1^{-1})^* (x_1, \ldots, x_d) = (h_1 (x_1, \ldots, x_{d-1}), \ldots, h_{d-1} (x_1, \ldots, x_{d-1})), h(x_d) + h_d (x_1, \ldots, x_{d-1})$ where $h, h_1, \ldots, h_d$ are linear functions and the transform $(h_1, \ldots, h_{d-1})$ as well as $h$ is invertible. Moreover we have $h(p_T) = p_{T'}$.

As before we have $({T'}^{-1})^* Q = (T_1^{-1})^* (T^{-1})^* Q$. Now we have $(T_1^{-1})^* (x^{(\alpha, 0)} f_{\alpha} (x_d)) = (h_1 (x_1, \ldots, x_{d-1}), \ldots, h_{d-1} (x_1, \ldots, x_{d-1}))^{\alpha} f_{\alpha} (h(x_d) + h_d (x_1, \ldots, x_{d-1}))$. Since $(h_1, \ldots, h_{d-1})$ is non-degenerate, all $(h_1 (x_1, \ldots, x_{d-1}), \ldots, h_{d-1} (x_1, \ldots, x_{d-1}))^{\alpha}$ are linearly independent. Hence the new lowest $|\alpha'|$ (with respect to $(P, l, T')$) is the same as the lowest $|\alpha|$ (with respect to $(P, l, T)$). When explicitly computing the $(P, l)$-multiplicity of $Q$ under $T'$, we may ignore the contribution of $h_d (x_1, \ldots, x_{d-1})$ altogether since they only produces $|\alpha'|$ larger than the lowest one. It is then straightforward to see that (by the linear independence of all $(h_1 (x_1, \ldots, x_{d-1}), \ldots, h_{d-1} (x_1, \ldots, x_{d-1}))^{\alpha}$ for lowest $\alpha$ with respect to $T$) the $(P, l)$-multiplicity of $Q$ does not depend on whether we choose the linear transform to be $T$ or $T'$.

For our convenience denote $m_Q (P, l) \geq 0$ to be the $(P, l)$-multiplicity of $Q$.

\begin{rem}
When $Q$ is not identically zero on $l$, $m_Q (P, l)$ is equal to the intersection multiplicity between $Q$ and $l$. However, when $Q$ vanishes on $l$ we still have this $m_Q (P, l) < \infty$. Note that under this situation it is still true that for almost all $P \in l$, $m_Q (P, l)=0$.
\end{rem}

In terms of $(P, l)$-multiplicity, we have an inequality in place of B\'{e}zout's Theorem of intersection multiplicity.

\begin{lem}\label{bezoutineq}
\begin{equation}\label{bezoutineqeqn}
\sum_{P \in l} m_Q (P, l) \leq \deg Q.
\end{equation}
\end{lem}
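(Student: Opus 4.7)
The plan is to fix one linear transform $T$ sending $l$ to the $x_d$-axis and work throughout with the expansion $(T^{-1})^* Q = \sum_{\alpha} x^{(\alpha, 0)} f_\alpha(x_d)$ from (\ref{defnofmPl}). Let $\mathcal{A}$ denote the set of \emph{lowest} tuples, i.e.\ those $\alpha$ with $f_\alpha \neq 0$ and $|\alpha|$ minimal. Crucially, $\mathcal{A}$ and the univariate polynomials $\{f_\alpha\}_{\alpha \in \mathcal{A}}$ depend only on $Q$, $l$, $T$, and not on the individual point $P \in l$; as $P$ varies along $l$, only the evaluation point $p_T$ (the $x_d$-coordinate of $T(P)$) changes.

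The idea is that instead of trying to use the full minimum that defines $m_Q(P,l)$, I just use one witness. Fix any single $\alpha_0 \in \mathcal{A}$. By the very definition of $(P,l)$-multiplicity as the minimum of $m_\alpha$ over lowest $\alpha$, we have for every $P \in l$
$$m_Q(P, l) \;\leq\; \mathrm{mult}_{p_T}(f_{\alpha_0}),$$
the multiplicity of $p_T$ as a root of $f_{\alpha_0}$. Since the map $P \mapsto p_T$ is a bijection between $l$ and $\mathbb{F}$, summing over $P$ gives
$$\sum_{P \in l} m_Q(P, l) \;\leq\; \sum_{p \in \mathbb{F}} \mathrm{mult}_p(f_{\alpha_0}) \;\leq\; \deg f_{\alpha_0}.$$

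To finish I would compare $\deg f_{\alpha_0}$ to $\deg Q$: the monomials making up $x^{(\alpha_0, 0)} f_{\alpha_0}(x_d)$ genuinely appear in $(T^{-1})^* Q$ (they cannot be cancelled by any other term, as no other summand in (\ref{defnofmPl}) contains the factor $x^{(\alpha_0, 0)}$), so their total multivariate degree $|\alpha_0| + \deg f_{\alpha_0}$ is at most $\deg\!\bigl((T^{-1})^* Q\bigr) = \deg Q$. Thus $\deg f_{\alpha_0} \leq \deg Q - |\alpha_0| \leq \deg Q$, giving the claimed inequality. There is no real obstacle here once the definition of $m_Q(P,l)$ has been set up: the proof is essentially a one-line application of the elementary fact that the root multiplicities of a univariate polynomial sum to at most its degree, applied to any single $f_{\alpha_0}$ rather than some cleverer combination of the lowest tuples.
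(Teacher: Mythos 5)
Your proof is correct and is essentially identical to the paper's: both fix a transform sending $l$ to the $x_d$-axis, pick a single lowest tuple $\alpha_0$, bound $m_Q(P,l)$ by the root multiplicity of $p_T$ in $f_{\alpha_0}$ using that the multiplicity is a minimum over lowest tuples, and then sum root multiplicities to get $\deg f_{\alpha_0} \leq \deg Q$. Your explicit remark that the set of lowest tuples is independent of the point $P$ is a useful clarification that the paper leaves implicit.
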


\begin{proof}
Without loss of generality, we may assume $l$ is the $x_d$-axis. Write
\begin{equation}
Q = \sum_{\alpha = (\alpha_1, \ldots, \alpha_{d-1})} x^{(\alpha, 0)} f_{\alpha} (x_d).
\end{equation}

We choose a lowest $\alpha = \alpha_0$. Then
\begin{equation}
\sum_{P \in l} m_Q (P, l) \leq \sum_{P = (0, \ldots, 0, p) \in l} \text{ the multiplicity of } p \text{ as a root of } f_{\alpha_0} \leq \deg f_{\alpha_0} \leq \deg Q.
\end{equation}
\end{proof}

In order to obtain a lower bound of $m_Q (P, l)$ we will often invoke the following very strong lemma.

\begin{lem}\label{lowerbdofm}
Assuming that a linear transform $T$ sends $P$ to $(0, \ldots, 0, p_T)$ and $l$ to the $x_d$-axis. We look at the Taylor expansion of $(T^{-1})^* Q$ at $(0, \ldots, 0, p_T)$:
\begin{equation}\label{decompositionintomonomials}
(T^{-1})^* Q = \sum_{\beta = (\beta_1, \ldots, \beta_d)} c_{\beta} x_1^{\beta_1} \cdots x_{d-1}^{\beta_{d-1}} \cdot (x_d-p_T)^{\beta_d}.
\end{equation}

Assuming some $\beta_0 = (\beta_{0, 1}, \ldots, \beta_{0, d})$ satisfying that $c_{\beta_0} \neq 0$ and that among all $\beta$ with $c_{\beta} \neq 0$, $|\beta_0|$ is the smallest possible. Then
\begin{equation}\label{monomiallowerboundofmultiplicity}
m_Q (P, l) \geq \beta_{0, d}.
\end{equation}
\end{lem}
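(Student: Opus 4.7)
The plan is to directly compare the Taylor expansion (\ref{decompositionintomonomials}) with the grouping (\ref{defnofmPl}) used to define $m_Q(P, l)$. First I would regroup the Taylor expansion by the exponents of $x_1, \ldots, x_{d-1}$: collecting terms with $(\beta_1, \ldots, \beta_{d-1}) = \alpha$ gives
\[
f_{\alpha}(x_d) = \sum_{k \geq 0} c_{(\alpha, k)} (x_d - p_T)^k.
\]
From this one reads off the dictionary: $\alpha$ is supported (i.e., $f_{\alpha} \neq 0$) if and only if some $c_{(\alpha, k)}$ is nonzero, and in that case the multiplicity of $p_T$ as a root of $f_{\alpha}$ is exactly the smallest such $k$.

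With this dictionary in hand, the argument reduces to a short bookkeeping inequality. Let $\alpha^*$ be any lowest index (so $|\alpha^*|$ is minimal among supported $\alpha$), and let $k^*$ be the multiplicity of $p_T$ as a root of $f_{\alpha^*}$, so $c_{(\alpha^*, k^*)} \neq 0$. It suffices to show $k^* \geq \beta_{0,d}$, since taking the minimum over lowest $\alpha^*$ then yields $m_Q(P,l) \geq \beta_{0,d}$. Set $\alpha' := (\beta_{0,1}, \ldots, \beta_{0,d-1})$; since $c_{(\alpha', \beta_{0,d})} = c_{\beta_0} \neq 0$, the index $\alpha'$ is supported, so minimality of $|\alpha^*|$ gives
\[
|\alpha^*| \leq |\alpha'| = |\beta_0| - \beta_{0,d}.
\]
On the other hand, $c_{(\alpha^*, k^*)} \neq 0$ together with the minimality of $|\beta_0|$ among all nonzero Taylor coefficients forces $|\alpha^*| + k^* \geq |\beta_0|$. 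Subtracting the two inequalities gives $k^* \geq \beta_{0,d}$, finishing the argument.

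I do not expect any real obstacle here: the lemma is essentially a repackaging statement that translates between two ways of organizing the monomials of $(T^{-1})^* Q$. The only mild care needed is to keep straight that the Taylor expansion is written in powers of $(x_d - p_T)$ while the definition of $m_Q(P,l)$ speaks of the root order of $f_{\alpha}(x_d)$ at $p_T$ — but these encode the same information. The real utility of the lemma will come later, where it lets one convert ``$Q$ vanishes to high total order at $P$'' (in the sense of its Taylor expansion) into a lower bound on $m_Q(P, l)$; combined with Lemma \ref{bezoutineq}, this will drive the counting arguments for Theorems \ref{multijoint} and \ref{carberytheorem}.
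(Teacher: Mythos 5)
Your proposal is correct and follows essentially the same route as the paper's own proof: both compare the minimality of $|\beta_0|$ in total degree with the minimality of $|\alpha|$ among supported indices to squeeze out the inequality $k^* \geq \beta_{0,d}$ for every lowest $\alpha$. The dictionary you set up between the Taylor coefficients $c_{(\alpha,k)}$ and the root order of $f_\alpha$ at $p_T$ is exactly the ``merging of terms'' step in the paper, so there is nothing to add.
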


\begin{proof}
Look at all possible $\beta = \beta' = (\beta_1 ', \ldots, \beta_d ')$ in (\ref{decompositionintomonomials}) such that $c_{\beta'} \neq 0$ and $\beta_1 ' + \cdots + \beta_{d-1} '$ is smallest possible. We must have $\sum_{j=1}^{d-1}\beta_j ' \leq \sum_{j=1}^{d-1} \beta_{0, j}$. But $|\beta'| \geq |\beta_0|$ by assumption. Thus $\beta_d ' \geq \beta_{0, d}$.

Now merge the terms in (\ref{decompositionintomonomials}) to have the form (\ref{defnofmPl}). We see that all the lowest $\alpha$ will have its $f_{\alpha}$, when expanded as Taylor series at $P$, having every term divisible by $(x_d - p_T)^{\beta_{0, d}}$. Hence $f_{\alpha}$ is divisible by $(x_d - p_T)^{\beta_{0, d}}$ and by definition (\ref{monomiallowerboundofmultiplicity}) holds.
\end{proof}

\begin{rem}
Lemma \ref{lowerbdofm} is strong in the following sense: Despite the fact that $m_Q (P, l)$ is independent of the choice of $T$, by this lemma we can bound it from below when only given some (very incomplete) information of any fixed $T$.
\end{rem}

\section{A proof of the Multijoints Conjecture}

Theorem \ref{multijoint} is a special case of Theorem \ref{carberytheorem}. However we have a very simple proof for Theorem \ref{multijoint} by the tools we have developed. Hence we present this proof in a separate section before proving the harder Theorem \ref{carberytheorem}.

\begin{proof}[Proof of Theorem \ref{multijoint}]
Assume that $J$ is the set of multijoints. It has finitely many points. For each $P \in J$, choose $l_{i, P} \in L_i (1 \leq i \leq d)$ such that $l_{i, P}$ all pass through $P$ and have their directions span $\mathbb{F}^d$. Choose an affine linear transform $T_P$ to transform $P$ to the origin and $l_{i, P}$ to the $i$-th coordinate axis ($1 \leq i \leq d$).

By parameter counting we deduce that there exists a nonzero polynomial $Q$ of degree $\lesssim |J|^{\frac{1}{d}}\cdot (\prod_{i=1}^d |L_i|)^{\frac{1}{d}}$ such that: For each point $P \in J$, $(T_P^{-1})^*Q$, when expanded as a sum of monomials, has no term $x_1^{\beta_1} \cdots x_d^{\beta_d}$ with $\beta_i \leq |L_i|$ simultaneously holding.

For any $P\in J$, assuming $c_{\beta} x^{\beta}$ is a nonzero monomial in $(T_P^{-1})^* (Q)$ such that $|\beta| = \beta_1 + \cdots \beta_d$ is the smallest (here $\beta = (\beta_1, \ldots, \beta_d)$). By the assumption on $Q$ above, at least one of $\beta_i > |L_i|$ holds. If there is such a $\beta$ s.t. $\beta_i > |L_i|$ holds we say that $P$ is of type $i$. By Lemma \ref{lowerbdofm} we have that for any $P \in J$ of type $i$, $m_Q (P, l_{i, P}) \geq \beta_i > |L_i|$.

Since each $P \in J$ is of some type $i= i_P \in \{1, 2, \ldots, d\}$, there exists some popular $i_0$ such that at least $\frac{|J|}{d}$ points in $J$ are of type $i_0$. Assuming such points form a set $J_{i_0}$. Then by the discussion above and Lemma \ref{bezoutineq},
\begin{equation}
|J||L_{i_0}| \lesssim |J_{i_0}||L_{i_0}| \leq \sum_{P \in J_{i_0}} m_Q (P, l_{i_0, P})\leq \sum_{P \in J} m_Q (P, l_{i_0, P}) \leq |L_{i_0}|\deg Q \lesssim |J|^{\frac{1}{d}}\cdot (\prod_{i=1}^d |L_i|)^{\frac{1}{d}} \cdot |L_{i_0}|
\end{equation}
which is equivalent to $|J| \lesssim (\prod_{i=1}^d N_i)^{\frac{1}{d-1}}$.
\end{proof}

\begin{rem}
In the study of multilinear incidence geometry problems such as ones in this paper, it was noted for quite a while (\cite{carberycomm}) that it can be good to have the low degree Taylor series, in addition to the polynomial itself, vanishing at given points. This is the approach we take here and in the next section.
\end{rem}

\section{A proof of Carbery's conjecture}

In this section we prove Carbery's conjecture on counting multijoints with multiplicity (Theorem \ref{carberytheorem}), which generalizes the Multijoints Theorem \ref{multijoint} we just proved. Our proof will be based on the techniques we have developed so far.

Theorem \ref{carberytheorem} is implied by the following theorem on joints with multiplicity.

\begin{thm}\label{multiplicityjointsthm}
In $\mathbb{F}^d$, assuming we are given $N$ lines. Here we allow a same line to show up multiple times and denote the resulting set with multiplicity to be $L = (l_1, \ldots, l_N)$. For every point $P \in \mathbb{F}^d$, define the joint multiplicity $M(P) = |\{(l_{i_1}, \ldots, l_{i_d}): \{l_{i_j}\} \text{ form a joint at } P\}|$. Then
\begin{equation}
\sum_{P \in \mathbb{F}^d} M(P)^{\frac{1}{d-1}} \lesssim N^{\frac{d}{d-1}}.
\end{equation}
\end{thm}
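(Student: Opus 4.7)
Let $S := \sum_P M(P)^{1/(d-1)}$; the goal is $S \lesssim N^{d/(d-1)}$. My plan is to adapt the template of the proof of Theorem~\ref{multijoint}, using per-joint vanishing thresholds tuned to the multiplicity $M(P)$. For each $P$ with $M(P) > 0$, pick $d$ linearly independent lines $l_{1,P}, \ldots, l_{d,P}$ from $L$ through $P$ and a linear transform $T_P$ sending them to the coordinate axes and $P$ to the origin. Set $k(P) := \lceil c\, M(P)^{1/(d(d-1))}\rceil$ for a small absolute constant $c$, so the ``box'' of forbidden monomials $\{x^\beta: \beta_i \leq k(P)\ \forall i\}$ has size $\prod_i (k(P)+1) \sim M(P)^{1/(d-1)}$. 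Parameter counting over all $P$ (totalling $\lesssim S$ conditions) then produces a nonzero polynomial $Q$ of degree $\lesssim S^{1/d}$ such that $(T_P^{-1})^*Q$ has no monomial in the box at each $P$.

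By Lemma~\ref{lowerbdofm}, at each $P$ some $\beta_{0, i_P} > k(P)$ for the lowest monomial $\beta_0$ of $(T_P^{-1})^*Q$, and hence $m_Q(P, l_{i_P, P}) > k(P)$. Pigeonholing on $i_P$ and summing Lemma~\ref{bezoutineq} line-by-line gives $\sum_P M(P)^{1/(d(d-1))} \lesssim N S^{1/d}$, which is too weak. To close the argument, I would amplify by certifying many lines per $P$: imposing the analogous box condition in each of the $M(P)$ frames $T_{P, \vec{l}}$ adapted to a joint tuple $\vec{l}$ at $P$ forces, via Lemma~\ref{lowerbdofm}, that at least one line of each tuple has $m_Q(P, l) > k(P)$. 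A double-counting argument (each line sits in at most $d\, n(P)^{d-1}$ ordered joint tuples, while $n(P) \geq M(P)^{1/d}$) then forces the certified set $S(P) := \{l : m_Q(P, l) > k(P)\}$ to have $|S(P)| \gtrsim M(P)^{1/d}$. Summing Lemma~\ref{bezoutineq} across the lines in $\bigcup_P S(P)$ yields $\sum_P |S(P)|\, k(P) \gtrsim \sum_P M(P)^{1/d+1/(d(d-1))} = S$, hence $S \lesssim N \deg Q \lesssim N S^{1/d}$, whence $S \lesssim N^{d/(d-1)}$.

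The main obstacle is the parameter count in this amplification: imposing conditions across $M(P)$ frames per $P$ naively inflates the total from $\sum_P M(P)^{1/(d-1)} = S$ up to $\sum_P M(P)^{d/(d-1)}$, pushing $\deg Q$ above $S^{1/d}$ and losing the argument. The resolution must exploit the heavy redundancy among these conditions, which all constrain the single Taylor expansion of $Q$ at $P$ merely viewed in different coordinate frames, to show that the effective number of independent conditions per $P$ is still only $\sim M(P)^{1/(d-1)}$. Making this redundancy precise --- or, equivalently, selecting a minimal sub-family of frames per $P$ that still yields the $|S(P)| \gtrsim M(P)^{1/d}$ certification via Lemma~\ref{lowerbdofm} --- is where the technical heart of the proof will lie, and is the key place where the generalized intersection multiplicity machinery developed in Section~3 must do work beyond what it did for Theorem~\ref{multijoint}.
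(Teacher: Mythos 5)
Your skeleton (one polynomial $Q$, per-point ``forbidden box'' conditions of total size $\sim M(P)^{\frac{1}{d-1}}$, a per-point lower bound on $\sum_{i:P\in l_i} m_Q(P,l_i)$, then Lemma \ref{bezoutineq} summed line by line to get $S\lesssim N\deg Q\lesssim NS^{\frac1d}$) is exactly the paper's, but the two steps you yourself flag as the ``technical heart'' are genuinely missing, and the amplification you sketch fails. Concretely, the double count gives $|S(P)|\geq M(P)/(d\,n(P)^{d-1})$, and to conclude $|S(P)|\gtrsim M(P)^{1/d}$ you would need $n(P)\lesssim M(P)^{1/d}$; the inequality you invoke, $n(P)\geq M(P)^{1/d}$, points in the opposite direction. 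The bound really can collapse: in $\mathbb{F}^3$ take $n$ lines through $P$ with distinct directions all lying in a plane, plus one transversal line. Then $M(P)\sim n^2$ and $n(P)\sim n$, so $M(P)/(d\,n(P)^{2})=O(1)$ while you need $|S(P)|\gtrsim n^{2/3}$. The same example shows that the isotropic threshold $k(P)\sim M(P)^{\frac{1}{d(d-1)}}$ is itself the wrong choice: no single scalar threshold works when the lines through $P$ are very unevenly distributed among subspaces. Finally, the parameter-count inflation from imposing conditions in all $M(P)$ frames is acknowledged but not resolved, so the proposal does not close.

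The paper avoids all three problems at once by using only \emph{one} frame per point and making the forbidden box \emph{anisotropic}. It defines the $j$th-minima $r_j(P,L)$, proves $M(P)\sim\prod_j r_j(P,L)$ (Lemma \ref{whatisMP}), builds a flag $V_1(P)\subseteq\cdots\subseteq V_d(P)$ with an extremality property, and forbids monomials $x^\beta$ with $\beta_j\leq 100^jB\prod_{k\neq j}r_k/(\prod_k r_k)^{\frac{d-2}{d-1}}$ for all $j$ simultaneously; the box volume is still comparable to $M(P)^{\frac{1}{d-1}}$ (up to constants depending on $B$), so the parameter count is unchanged and no redundancy argument across $M(P)$ frames is needed. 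The certification step then shows, via a pivot index $j_0$, a second index $j_1$, the flag's extremality, and a change-of-frame application of Lemma \ref{lowerbdofm}, that $\gtrsim r_{j_0}(P,L)$ lines through $P$ each have $m_Q(P,l)\gtrsim B\prod_{k\neq j_0}r_k/(\prod_k r_k)^{\frac{d-2}{d-1}}$; the product of these two quantities is $\sim B\,M(P)^{\frac{1}{d-1}}$ for every $j_0$, which is the crucial estimate replacing your double count. Without something equivalent to this flag construction and anisotropic box, your argument does not go through.
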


\begin{proof}[Proof that Theorem \ref{multiplicityjointsthm} implies Theorem \ref{carberytheorem}]
Assuming we have Theorem \ref{multiplicityjointsthm} proved already and are given the assumption of Theorem \ref{carberytheorem}. Now consider the collection $L$ of all $d$ families $L_i$ with each line in $L_i$ repeated $\prod_{j \neq i} N_j$ times. Then $N = |L| \sim \prod_{i=1}^d N_i$. Moreover, at each $P$ each multijoint contributes $(\prod_{i=1}^d N_i)^{d-1}$ to the joint multiplicity $M(P)$ of $L$ at $P$. Apply Theorem \ref{multiplicityjointsthm} to $L$ and we have
\begin{equation}
\sum_{P \in \mathbb{F}^d} \mu(P)^{\frac{1}{d-1}}\cdot \prod_{i=1}^d N_i \leq \sum_{P \in \mathbb{F}^d} M(P)^{\frac{1}{d-1}} \lesssim (\prod_{i=1}^d N_i)^{\frac{d}{d-1}}
\end{equation}
and hence (\ref{carberytheoremeq}), as desired.
\end{proof}

In order to prove Theorem \ref{multiplicityjointsthm}, we do some preliminary work to understand $M(P)$ better.

\begin{defn}
Assuming $L = (l_1, \ldots, l_N)$ is a set (with multiplicity) of lines in $\mathbb{F}^d$. For any $P \in \mathbb{F}^d$ and arbitrary integer $1 \leq j \leq d$, define \emph{the $j$th-minimum of $L$ at $P$} $r_j (P, L)$ to be
\begin{equation}
r_j (P, L) = \min_{V \text{ is a subspace of } \mathbb{F}^d, \dim V = j-1} |\{i: P \in l_i, l_i \text{ is not parallel to } V\}|.
\end{equation}
\end{defn}

Hence $r_1 (P, L)$ is simply the number of $l_i$'s that pass through $P$. As another example, $r_d (P, L) > 0$ is equivalent to saying that the lines in $L$ form at least one joint at $P$. It is also trivial that $r_1 (P, L) \geq \cdots \geq r_d (P, L)$. The reason that we call them $j$th-minimum is simply that they resemble the successive minima in the geometry of numbers a bit.

\begin{lem}\label{whatisMP}
\begin{equation}\label{whatisMPeq}
M(P) \sim \prod_{j=1}^d r_j (P, L).
\end{equation}
\end{lem}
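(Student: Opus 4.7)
The plan is to prove the lemma by induction on $d$. The base case $d=1$ is trivial since $M(P)=|L_P|=r_1(P,L)$, where $L_P\subseteq L$ is the multiset of lines through $P$. For $d\geq 2$, start with
\[
M(P)=\sum_{l_d\in L_P} N(l_d),
\]
where $N(l_d)$ is the number of ordered $(d-1)$-tuples $(l_1,\ldots,l_{d-1})$ from $L_P$ such that $(l_1,\ldots,l_{d-1},l_d)$ is an ordered joint tuple at $P$. After fixing $l_d$ and projecting to the quotient $\mathbb{F}^d/\mathrm{span}(\mathrm{dir}(l_d))\cong\mathbb{F}^{d-1}$, $N(l_d)$ becomes the analogous joint-multiplicity at $P$ of the projected configuration of lines in $L_P$ not parallel to $l_d$. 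The inductive hypothesis then gives $N(l_d)\sim\prod_{j=1}^{d-1} r'_j(l_d)$, where $r'_j(l_d)$ denotes the $j$th-minimum of this projected configuration in $\mathbb{F}^{d-1}$.

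Next I compare $r'_j(l_d)$ with $r_j(P,L)$. The easy inequality $r'_j(l_d)\leq r_j(P,L)$ for every $l_d$ and $j$ follows by extending any $(j-1)$-dim subspace $V\subset\mathbb{F}^d$ realizing $r_j$ to the $j$-dim subspace $W=V+\mathrm{span}(\mathrm{dir}(l_d))$, which contains $\mathrm{dir}(l_d)$ and excludes no more lines of $L_P$ than $V$ does. Combined with a symmetry-over-positions argument---each joint tuple must contain at least one line with direction outside any chosen hyperplane $V^*$ realizing $r_d(P,L)$, so by averaging at least $M(P)/d$ of the tuples have $\mathrm{dir}(l_d)\notin V^*$---this yields
\[
M(P)\leq d\sum_{l_d\in S}N(l_d)\lesssim r_d\prod_{j=1}^{d-1}r_j,
\]
where $S=\{l\in L_P:\mathrm{dir}(l)\notin V^*\}$ has $|S|=r_d$.

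The main obstacle is the matching lower bound. For this I would show that for every $l_d\in S$ and every $j\in\{1,\ldots,d-1\}$, $r'_j(l_d)\geq r_j(P,L)/2$. Take a $j$-dim subspace $W\ni\mathrm{dir}(l_d)$ realizing $r'_j(l_d)$; since $\mathrm{dir}(l_d)\notin V^*$, the intersection $W\cap V^*$ is $(j-1)$-dim, so at least $r_j(P,L)$ lines of $L_P$ have direction outside $W\cap V^*$, of which at most $r_d$ can have direction in $W\setminus V^*$ (because only $r_d$ lines in total have direction outside $V^*$). This gives $r'_j(l_d)\geq r_j-r_d$, and a two-case argument closes the gap: if $r_d\leq r_j/2$ use this directly; otherwise the trivial $r'_j(l_d)\geq r_{j+1}\geq r_d>r_j/2$ applies. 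Combined with the inductive lower bound on $N(l_d)$, summing over $l_d\in S$ yields $M(P)\geq\sum_{l_d\in S}N(l_d)\gtrsim r_d\prod_{j=1}^{d-1}r_j=\prod_{j=1}^d r_j$, closing the induction.
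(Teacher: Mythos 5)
Your proof is correct, but it takes a genuinely different route from the paper's. The paper argues directly in $\mathbb{F}^d$: the lower bound $M(P)\geq \prod_j r_j(P,L)$ comes from a one-step greedy construction (having chosen $j-1$ lines spanning a $(j-1)$-dimensional space, at least $r_j(P,L)$ further lines through $P$ are not parallel to it), and the upper bound comes from fixing minimizing subspaces $W_j(P)$ once and for all and observing that any joint tuple must place at least $d-j+1$ of its indices in $X_j(P)$, which yields an injection-with-bounded-fibers into $\prod_j X_j(P)$ up to a factor $d!$. You instead induct on the dimension, projecting along a popular last line $l_d$ whose direction avoids a hyperplane realizing $r_d$; the substantive extra work in your argument is the stability estimate $r_j(P,L)/2\leq r'_j(l_d)\leq r_j(P,L)$ for the successive minima of the projected configuration, and your two-case argument via $r'_j(l_d)\geq \max(r_j-r_d,\ r_{j+1})$ is a clean way to get it. Both proofs give constants depending only on $d$; the paper's is shorter and avoids induction entirely, while yours isolates a projection/dimension-reduction principle (invariance of the successive minima under projection along a generic line of the configuration) that is of some independent interest and echoes how multilinear Kakeya-type statements are sometimes reduced in dimension. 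One cosmetic point: since the paper fixes $d>1$ and the notion of a joint degenerates in $\mathbb{F}^1$, it is cleaner to start your induction at $d=2$, where $M(P)\sim r_1r_2$ can be checked directly by the same averaging argument.
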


\begin{proof}
Let us count the number of tuples $(l_{i_1}, \ldots, l_{i_d})$ forming a joint at $P$. Once $\{l_{i_k}\}_{1\leq k < j}$ are fixed we always have at least $r_j (P, L)$ different ways to choose $l_{i_j} \ni P$ not parallel to the $(j-1)$-dimensional subspace determined by $\{l_{i_k}\}_{1\leq k < j}$. In this way eventually the $d$ lines we choose will form a joint at $P$. Hence $M(P) \geq \prod_{i=1}^d r_j (P, L)$.

On the other hand, for $1\leq j \leq d$ choose $W_j (P)$ to be a $(j-1)$-dimensional space such that the set $X_j (P) = \{i : p \in l_i, l_i \text{ not parallel to } W_j (P)\}$ has cardinality $r_j (P, L)$. Then for any $l_{i_1}, \ldots, l_{i_d}$ forming a joint at $P$, there has to be at least one number among $i_1, \ldots, i_d$ that belongs to $X_d (P)$; at least two numbers among $i_1, \ldots, i_d$ that belong to $X_{d-1} (P)$, $\ldots$, at least $d$ numbers among $i_1, \ldots, i_d$ that belong to $X_{1} (P)$. Hence among $i_1, \ldots, i_d$ there is at least one number that belongs to $X_d (P)$; among the rest there is at least one that belongs to $X_{d-1} (P)$, $\ldots$, in the end the one number left belongs to $X_{1} (P)$. Hence $M(P) \lesssim \prod_{j=1}^d r_j (P, L)$.
\end{proof}

The proof of Theorem \ref{multiplicityjointsthm} is then a result of a good understanding on the lower bound $m_Q (P, l)$ for an \emph{arbitrary} $l$ passing through $P$. We want $m_Q (P, l)$ to be large on most directions and are able to prove that (intuitively) when it is small the line $l$ is usually parallel to certain ``bad'' subspaces (note that we do not run into more complicated ``bad subvarieties situation'' in our proof at all, which could be somewhat counterintuitive). This is then good enough to prove Theorem \ref{multiplicityjointsthm}.

\begin{proof}[Proof of Theorem \ref{multiplicityjointsthm}]
For every point $P$ such that $M(P) > 0$, we claim that we can find a flag $V_1 (P) \subseteq V_2 (P) \subseteq \cdots \subseteq V_d (P) \subseteq V_{d+1} (P) = \mathbb{F}^d$, such that (a) $\dim V_j (P) = j-1, 1\leq j \leq d$; (b) the set $L_j (P) = \{i : p \in l_i, l_i \text{ not parallel to } V_j (P)\}$ has cardinality $\sim r_j (P, L)$ for $1 \leq j \leq d$ and (c) for any $1 \leq j \leq d$ and any $(j-1)$-dimensional subspace $V$ of $V_{j+1} (P)$, we have
\begin{equation}\label{extremeofVj}
|\{i : p \in l_i, l_i \text{ is parallel to } V\}| \leq |\{i : p \in l_i, l_i \text{ is parallel to } V_j (P)\}|.
\end{equation}

Note that this is slightly stronger than what we had for $W_j (P)$'s and $X_j (P)$'s in the proof of Lemma \ref{whatisMP}.

In this paragraph we prove the above claim. Our strategy is to choose $\{V_j (P)\}$ inductively (downwards): By the definition of $r_d (P, L)$, there exists a $(d-1)$-dimensional subspace $V_d (P)$ such that the above defined $L_d (P)$ has cardinality $r_d (P, L)$. Assuming we already have $V_{j+1} (P)\subseteq \cdots \subseteq V_d (P) (1\leq j < d)$ such that (a), (b) and (c) hold for them. Now we look for $V_j (P)$. By the definition of $r_j (P, L)$, there exists $V_j ' (P)$ being a $(j-1)$-dimensional subspace such that the set $\{i : p \in l_i, l_i \text{ not parallel to } V_j ' (P)\}$ has cardinality $r_j (P, L)$. We choose $V_j '' (P)$ to be an arbitrary $(j-1)$-dimensional subspace of $V_{j+1} (P)$ containing $V_j ' (P) \bigcap V_{j+1} (P)$. Now if a line is not parallel to $V_j '' (P)$, then it is either not parallel to $V_j ' (P)$ or not parallel to $V_{j+1} (P)$. The number of the lines of the first type is $\leq r_j (P, L)$ while by induction hypothesis the number of the lines of the second type is $\lesssim r_{j+1} (P, L) \leq r_j (P, L)$. Hence the set $\{i : p \in l_i, l_i \text{ not parallel to } V_j '' (P)\}$ has cardinality $\lesssim r_j (P, L)$. Now choose $V_j (P) \subseteq V_{j+1} (P)$ such that $\dim V_j (P) = j-1$ and $|\{i : P \in l_i, l_i \text{ parallel to } V_j (P)\}|$ is maximal possible. Then it is obvious that $|\{i : p \in l_i, l_i \text{ not parallel to } V_j (P)\}| \leq |\{i : p \in l_i, l_i \text{ not parallel to } V_j '' (P)\}| \lesssim r_j (P, L)$ and $V_j(P)$ satisfies (c) as well as (a) and (b). This closes the induction and our claim follows.

Now for every $P$ such that $M(P) > 0$, we choose $d$ lines $s_{1, P}, \ldots, s_{d, P}$ passing through $P$ such that $V_j (P)$ is spanned by the directions of $s_{k, P}, 1 \leq k < j$. Notice that $s_{j, P}$ may not belong to $L$ anymore. We then fix a linear transform $T_P$ that sends $P$ to the origin and $s_{j, P}$ to the $j$-th coordinate axis.

We look for a nonzero polynomial $Q$ such that $(T_P^{-1})^* Q$, when expanded as a sum of monomials (Taylor series), has no term of the form $x_1^{\beta_1} \cdots x_d^{\beta_d}$ with $\beta_j \leq \frac{100^j B\cdot\prod_{k \neq j} r_k(P, L)}{(\prod_{k=1}^d r_k (P, L))^{\frac{d-2}{d-1}}}$ simultaneously holding. Here $B$ is a large positive integer depending on $L$ such that all $\frac{B\cdot\prod_{k \neq j} r_k(P, L)}{(\prod_{k=1}^d r_k (P, L))^{\frac{d-2}{d-1}}} > 100d^{100}$, $\forall P, j$ with $M(P) > 0$. By Lemma \ref{whatisMP} and parameter counting, there exists such a nonzero $Q$ with $\deg Q \lesssim B\cdot (\sum_{P \in \mathbb{F}^d} M(P)^{\frac{1}{d-1}})^{\frac{1}{d}}$.

Next we prove the following crucial estimate for every $P \in \mathbb{F}^d$ satisfying $M(P) > 0$:
\begin{equation}\label{contributionofP}
\sum_{i: P \in l_i} m_Q (P, l_i) \gtrsim B\cdot M(P)^{\frac{1}{d-1}}.
\end{equation}

To prove (\ref{contributionofP}), we fix $P$. Write
\begin{equation}
(T_P^{-1})^* Q = \sum_{\beta} c_{\beta} x^{\beta}
\end{equation}
and then look at all $\beta$ such that $c_{\beta} \neq 0$ and $|\beta|$ smallest possible. Call all such $\beta$ to be \emph{lowest}. By the assumption of $Q$, there is some $1 \leq j_0 \leq d$ such that (i) for every lowest $\beta$ and every $j < j_0$, $\beta_j \leq \frac{100^j B\cdot\prod_{k \neq j} r_k(P, L)}{(\prod_{k=1}^d r_k (P, L))^{\frac{d-2}{d-1}}}$ and (ii) there exists a lowest $\beta$ such that $\beta_{j_0} > \frac{100^{j_0} B\cdot\prod_{k \neq j_0} r_k(P, L)}{(\prod_{k=1}^d r_k (P, L))^{\frac{d-2}{d-1}}}$. We fix $j_0$ in the rest of the proof of (\ref{contributionofP}).

Let $j_1\leq d$ be the largest integer $j$ such that
\begin{eqnarray}\label{spacecapacitycondition}
& |\{i : P \in l_i, l_i \text{ is parallel to } V_j (P) \text{ but not parallel to }V_{j_0} (P)\}|\nonumber\\
& \leq \frac{1}{2} |\{i : P \in l_i, l_i \text{ is parallel to } V_{j+1} (P) \text{ but not parallel to }V_{j_0} (P)\}|.
\end{eqnarray}

We notice $j = j_0$ satisfies (\ref{spacecapacitycondition}) as LHS would be $0$. Hence $j_1 \geq j_0$.

Note that $|\{i : P \in l_i, l_i \text{ is parallel to } V_{d+1} (P) \text{ but not parallel to }V_{j_0} (P)\}| = |L_{j_0} (P)| \sim r_{j_0} (P, L)$. By the opposite of (\ref{spacecapacitycondition}) from $j= j_1 + 1$ to $j = d$ we deduce
\begin{equation}
|\{i : P \in l_i, l_i \text{ is parallel to } V_{j_1 + 1} (P) \text{ but not parallel to }V_{j_0} (P)\}| \gtrsim r_{j_0} (P, L)
\end{equation}
which also holds when $j_1 = d$.

Since $V_{j_1 + 1} \subseteq V_{d+1}$ we also have the other side of the inequality:
\begin{equation}\label{middledimensionestimate}
|\{i : P \in l_i, l_i \text{ is parallel to } V_{j_1 + 1} (P) \text{ but not parallel to }V_{j_0} (P)\}| \sim r_{j_0} (P, L).
\end{equation}

Moreover by (\ref{spacecapacitycondition}) for $j = j_1$ we also have
\begin{equation}\label{step0}
|\{i : P \in l_i, l_i \text{ is parallel to } V_{j_1 + 1} (P) \text{ but not parallel to } V_{j_1}\}| \gtrsim r_{j_0} (P, L).
\end{equation}

By the assumption (c) in the beginning of our current proof of Theorem \ref{multiplicityjointsthm} and (\ref{step0}), for any $V_{j_0} (P) \subseteq V \subseteq V_{j_1+1} (P)$ with $\dim V =j_1 -1$,
\begin{equation}\label{step1}
|\{i : P \in l_i, l_i \text{ is parallel to } V_{j_1 + 1} (P) \text{ but not parallel to }V\}| \gtrsim r_{j_0} (P, L).
\end{equation}

By (\ref{middledimensionestimate}) we also have the other side of (\ref{step1}):
\begin{equation}\label{middledimensionlowerbd}
|\{i : P \in l_i, l_i \text{ is parallel to } V_{j_1 + 1} (P) \text{ but not parallel to }V\}| \sim r_{j_0} (P, L).
\end{equation}

Let $t$ = $j_1 - j_0 +1 > 0$. For any $t$ lines $l_{i_1}, \ldots, l_{i_t}$ such that the direction of them and $V_{j_0}$ exactly span $V_{j_1 + 1}$, we prove there exists $1 \leq h \leq t$ such that for $l = l_{i_h}$,
\begin{equation}\label{localestimateonjoints}
m_{Q} (P, l) \gtrsim \frac{B\cdot\prod_{k \neq j_0} r_k(P, L)}{(\prod_{k=1}^d r_k (P, L))^{\frac{d-2}{d-1}}}.
\end{equation}

In order to show (\ref{localestimateonjoints}), we choose a linear transform $T_P '$ that preserves the origin and the directions of the $j$-th coordinate axis for $1\leq j < j_0$ or $j_1 < j \leq d$, while sending ${(T_P)}_* l_{i_{j-j_0 + 1}}$ to the $j$-th coordinate axis for $j_0 \leq j \leq j_1$ (by assumption in the beginning of this paragraph, all $d$ directions we mentioned are linearly independent). Then $T_P ' \circ T_P$ sends $l_{i_{j-j_0 + 1}}$ to the $j$-th coordinate axis, $j_0 \leq j \leq j_1$. Moreover, $((T_P ' \circ T_P)^{-1})^* Q = ((T_{P} ')^{-1})^* (T_P^{-1})^* Q$. We assume that $\delta$ is a positive number such that among all the lowest order terms $\prod_{k=1}^d x_k^{\gamma_k}$ of $((T_P ' \circ T_P)^{-1})^* Q$, $\delta$ is the maximal possible sum $\gamma_{j_0} + \ldots + \gamma_{j_1}$. Now look at the action of $(T_P ')^*$ on this polynomial. By the assumptions, $(T_P ')^* x_j = x_j$ for $j_1 < j \leq d$, $(T_P ')^* x_j = x_j + H_j (x_{j_0}, \ldots, x_{j_1})$ for $1 \leq j < j_0$ where $H_j$ is some linear form, and finally $(T_P ')^* x_j = F_j (x_{j_0}, \ldots, x_{j_1})$ for $j_0 \leq j \leq j_1$ where the linear transformation $(x_{j_0}, \ldots, x_{j_1}) \mapsto (F_{j_0} (x_{j_0}, \ldots, x_{j_1}), \ldots, F_{j_1} (x_{j_0}, \ldots, x_{j_1}))$ is invertible. The action of its inverse $((T_P ')^{-1})^*$ on coordinate functions has the same form. Now by assumption (i), for every monomial $x^{\beta}$ with nonzero coefficient in the lowest homogeneous term of $((T_P)^{-1})^* Q$ and every $j < j_0$, $\beta_j \leq \frac{100^j B\cdot\prod_{k \neq j} r_k(P, L)}{(\prod_{k=1}^d r_k (P, L))^{\frac{d-2}{d-1}}}$. Hence this is also true for every monomial $x^{\beta}$ with nonzero coefficient in the lowest homogeneous term of $((T_P ' \circ T_P)^{-1})^* Q$. Apply $(T_P ')^*$ to $((T_P ' \circ T_P)^{-1})^* Q$ we actually get $(T_P^{-1})^* Q$. By the bounds on the exponents of $x_j (j \leq j_1)$ in the lowest homogeneous part of $((T_P ' \circ T_P)^{-1})^* Q$ we have above and the fact that $r_1 (P, L) \geq \cdots \geq r_d (P, L)$, we deduce that for any lowest order term $\prod_{k=1}^d x_k^{\beta_k}$ of $(T_P^{-1})^* Q$,
\begin{eqnarray}
\sum_{j=1}^{j_1} \beta_j &\leq \sum_{j=1}^{j_0 -1} \frac{100^j B\cdot\prod_{k \neq j} r_k(P, L)}{(\prod_{k=1}^d r_k (P, L))^{\frac{d-2}{d-1}}} + \delta\nonumber\\
& \leq \frac{2 \cdot 100^{j_0 -1} B\cdot\prod_{k \neq j_0} r_k(P, L)}{(\prod_{k=1}^d r_k (P, L))^{\frac{d-2}{d-1}}} + \delta.
\end{eqnarray}

But by the assumption (ii) there exists such a $\beta$ satisfying $\beta_{j_0} > \frac{100^{j_0} B\cdot\prod_{k \neq j_0} r_k(P, L)}{(\prod_{k=1}^d r_k (P, L))^{\frac{d-2}{d-1}}}$. We deduce that $\delta \gtrsim \frac{B\cdot\prod_{k \neq j_0} r_k(P, L)}{(\prod_{k=1}^d r_k (P, L))^{\frac{d-2}{d-1}}}$. This means among all the lowest order terms $\prod_{k=1}^d x_k^{\gamma_k}$ of $((T_P ' \circ T_P)^{-1})^* Q$, there is at least one with $\gamma_{j_0} + \ldots + \gamma_{j_1} \gtrsim \frac{B\cdot\prod_{k \neq j_0} r_k(P, L)}{(\prod_{k=1}^d r_k (P, L))^{\frac{d-2}{d-1}}}$. Note that $T_P ' \circ T_P$ sends $l_{i_{j-j_0 + 1}}$ to the $j$-th coordinate axis, $j_0 \leq j \leq j_1$, by Lemma \ref{lowerbdofm} we deduce that (\ref{localestimateonjoints}) has to hold for some $l = l_{i_h}$, $1 \leq h \leq t$.

Define $Y(P) = \{i : P \in l_i, l_i \text{ is parallel to } V_{j_1 + 1} (P) \text{ but not parallel to }V_{j_0} (P)\}$ Look at all the lines $l = l_i$ violating (\ref{localestimateonjoints}) with $i \in Y(P)$. By the above discussion, there are never $(j_1 - j_0 + 1)$ such lines satisfying their directions and $V_{j_0}$ span $V_{j_1}$. Therefore all the directions of such lines have to be contained in a common $(j_1 -1)$-dimensional subspace $\widetilde{V} (P)$. Now by (\ref{middledimensionlowerbd}) with $V = \widetilde{V} (P) \supseteq V_{j_0}$ we deduce that there are $\gtrsim r_{j_0} (P, L)$ different $i\in Y(P)$ such that $l = l_i$ satisfies (\ref{localestimateonjoints}). Summing (\ref{localestimateonjoints}) over all of them and note that $M(P) \sim \prod_{k=1}^d r_k (P, L)$, we obtain (\ref{contributionofP}).

The rest of the proof is simple. Summing (\ref{contributionofP}) over all $P \in \mathbb{F}^d$ (it certainly holds for those $P$ with $M(P) = 0$ too) and use Lemma \ref{bezoutineq}, we deduce
\begin{eqnarray}
B\cdot \sum_{P \in \mathbb{F}^d} M(P)^{\frac{1}{d-1}} \lesssim &\sum_{P\in \mathbb{F}^d} \sum_{i: P \in l_i} m_Q (P, l_i)\nonumber\\
= & \sum_{1\leq i\leq N} \sum_{P \in l_i} m_Q (P, l_i)\nonumber\\
\leq & N \cdot \deg Q\nonumber\\
\lesssim & B\cdot N \cdot (\sum_{P \in \mathbb{F}^d} M(P)^{\frac{1}{d-1}})^{\frac{1}{d}}
\end{eqnarray}
which is equivalent to the conclusion of the theorem.
\end{proof}

%\subsection{}
\bibliographystyle{amsalpha}
\bibliography{stageref}

Department of Mathematics, Princeton University, Princeton, NJ 08540

ruixiang@math.princeton.edu

\end{document}